\DeclareMathOperator{\Gr}{Gr}
\newcommand{\Z}{{\mathbb Z}}
\newcommand{\cO}{{\mathcal O}}
\newtheorem{thm}{Theorem}[section]
\newtheorem{lemma}[thm]{Lemma}
\newtheorem{prop}[thm]{Proposition}
\theoremstyle{defn} \newtheorem{defn}[thm]{Definition}}
\theoremstyle{remark} \newtheorem{remark}[thm]{Remark}
\begin{document}

\title{Galkin's Lower Bound Conjecture Holds for the Grassmannian} %\author{Changzheng Li}
%\address{School of Mathematics, Sun Yat-sen University, Guangzhou 510275, P.R. China} \email{lichangzh@mail.sysu.edu.cn}
%\author{Leonardo C. Mihalcea}
%\address{ Department of Mathematics, 460 McBryde Hall, Virginia Tech University, Blacksburg VA 24060 USA}
\author{La'Tier Evans} 
\address{ Department of Mathematics and Computer Science, Henson Science Hall, Salisbury University, Salisbury MD 21801 USA}\email{levans4@gulls.salisbury.edu}
\author{Lisa Schneider} 
\address{ Department of Mathematics and Computer Science, Henson Science Hall, Salisbury University, Salisbury MD 21801 USA}\email{lmschneider@salisbury.edu}
\author{Ryan M. Shifler} 
\address{ Department of Mathematics and Computer Science, Henson Science Hall, Salisbury University, Salisbury MD 21801 USA}\email{rmshifler@salisbury.edu}
\author{Laura Short} 
\address{ Department of Mathematics and Computer Science, Henson Science Hall, Salisbury University, Salisbury MD 21801 USA}\email{lshort1@gulls.salisbury.edu}
\author{Stephanie Warman} 
\address{ Department of Mathematics and Computer Science, Henson Science Hall, Salisbury University, Salisbury MD 21801 USA}\email{swarman2@gulls.salisbury.edu}
%\date{December 13, 2018}
%\thanks{C. L. was supported in part by NSFC Grants  11771455,  11831017 and 11521101.}
%\thanks{L.M. was supported in part by NSA Young Investigator Award 98320-16-1-0013 and a Simons Collaboration grant.}
\subjclass[2010]{Primary 14N35; Secondary 15B48, 14N15, 14M15}

\begin{abstract} %Let $\IG:=\IG(k, 2n+1)$ be the odd-symplectic Grassmannian. 
Let $\Gr(k,n)$ be the Grassmannian. The quantum multiplication by the first Chern class $c_1(\Gr(k,n))$ induces an endomorphism $\hat c_1$ of the finite-dimensional vector space $\mathrm{QH}^*(\Gr(k,n))_{|q=1}$ specialized at $q=1$. Our main result is a case that a conjecture by Galkin holds. It states that the largest real eigenvalue of $\hat{c}_1$ is greater than or equal to $\dim \Gr(k,n)$+1 with equality if and only if $\Gr(k,n)=\mathbb{P}^{n-1}.$ \end{abstract}
\maketitle

\section{Introduction}\label{s:intro} Fix $1 \leq k \leq n-1$ and let $\Gr(k,n)$ be the Grassmannian. This is the parametrization of $k$ dimensional linear spaces $V \subset \mathbb{C}^n$. The purpose of this paper is to verify a conjecture by Galkin \cite{Galkin} for the Grassmannian. We recall the precise statements, following \cite{Galkin} and \cite[\S 3]{GGI}.

Let $F$ be a Fano manifold. Let $K:=K_{F}$ be the canonical bundle of $F$ and let $c_1(F):=c_1(-K) \in H^2(F)$ be the anticanonical class. The quantum cohomology ring $(\mathrm{QH}^*(F), \star)$ is a graded algebra over $\Z[q]$, where $q$ is the quantum parameter. Consider the specialization $H^\bullet(F):= \mathrm{QH}^*(F)_{|q=1}$ at $q=1$. The quantum multiplication by the first Chern class $c_1(F)$  induces an endomorphism $\hat c_1$ of the finite-dimensional vector space $H^\bullet(F)$:
 \[ y\in H^\bullet(F) \mapsto \hat c_1(y):= (c_1(F)\star y)|_{q=1} \/. \] Denote by $\delta_0:=\max\{|\delta|:\delta \mbox{ is an eigenvalue of } \hat c_1\}$. Galkin's lower bound conjecture \cite{Galkin} states the following:
 \[ \delta_0 \geq \dim_{\mathbb{C}}F+1 \mbox{ with equality if and only if } F=\mathbb{P}^N. \]
 The conjecture was verified for del Pezzo surfaces \cite{HKLY} and projective complete intersections \cite{Ke}. 
\begin{remark}
The conjecture is related to Property $\mathcal{O}$ which states the following:
  \begin{enumerate}
    \item The real number $\delta_0$ is an eigenvalue of $\hat c_1$ of multiplicity one.
    \item If $\delta$ is any eigenvalue of $\hat c_1$ with $|\delta|=\delta_0$, then $\delta=\delta_0 \zeta$ for some $r$-th root of unity $\zeta\in \mathbb{C}$, where $r$ is the Fano index of $F$.
  \end{enumerate}

Property $\cO$ is conjectured to hold for any complex Fano manifold $F$. Property $\cO$ has been verified for the Grassmannian and several other cases of Fano manifolds; see \cite{Riet,GaGo,Cheo,ChLi,GaIr,SaSh,Ke,HKLY,LMS,BFSS,Withrow}. It is the main hypothesis needed for the statement of Gamma Conjectures I and II, which in turn are related to mirror symmetry on $F$ and generalize the Dubrovin conjectures; we refer the reader to \cite{GGI} for details. 

%Property $\cO$ was proved for several Grassmannians of classical types \cite{Riet,GaGo,Cheo} and a complete proof is given for any homogeneous space $G/P$ \cite{ChLi}. Other known cases include complete intersections in projective spaces \cite{GaIr,SaSh,Ke}, del Pezzo surfaces \cite{HKLY}, horospherical varieties \cite{LMS,BFSS}, and a Bott-Samelson threefold \cite{Withrow}.
\end{remark}

The (small) quantum cohomology of $\Gr(k,n)$, denoted by $\mbox{QH}^*(\Gr(k,n))$, has a graded $\mathbb{Z}[q]$-basis consisting of Schubert classes $\{\sigma_\lambda \}$ indexed by partitions in the set \[ \Lambda:=\{(\lambda_1 \geq \lambda_2 \geq \cdots \geq \lambda_k): n-k \geq \lambda_1, \lambda_k \geq 0\} \] where $|\lambda|=\lambda_1+\lambda_2+\cdots+\lambda_k$ for $\lambda \in \Lambda$ and $\deg q=n$. The ring multiplication is given by \[ \sigma_\lambda \star \sigma_\mu = \sum_{\nu, d \geq 0} c_{\lambda, \mu}^{\nu,d}q^d \sigma_\nu \] where $c_{\lambda, \mu}^{\nu,d}$ is the Gromov-Witten invariant that enumerates degree $d$ rational curves intersecting general translates of $\sigma_{\lambda}, \sigma_{\mu},$ and the Poincar\'e dual of $\sigma_{\nu}$. We refer the reader to \cite{FP}, \cite{buch}, and \cite{kontsevich.manin:GW:qc:enumgeom} for additional details regarding the quantum cohomology of the Grassmanian. The quantum Chevalley formula is given by the multiplication of the divisor class $\sigma_{(1)}$ (\cite{bertram}). Let $\lambda^*=(\lambda_2-1 \geq \cdots \geq \lambda_k-1 \geq 0)$ if $\lambda_1=n-k$ and $\lambda_k>0$, otherwise $\lambda^*$ does not exist.

If $\lambda \in \Lambda$ then
\[\sigma_{(1)} \star \sigma_{\lambda}=q \sigma_{\lambda^*}+\sum \sigma_{\mu} \]
where the sum is over all partitions $\mu \in \Lambda$ such that $|\mu|=|\lambda|+1$ and $\lambda \subset \mu$ and the quantum term is omitted if $\lambda^*$ does not exist. The anticanonical class is $c_1(\Gr(k,n))=n \sigma_{(1)}$ and the Fano index is $n$. The linear operator $\hat c_1$ is given next. 

If $\lambda \in \Lambda$ then
\[ \hat c_1( \sigma_{\lambda}) =n\sigma_{\lambda^*}+n\sum \sigma_{\mu} \]
where the sum is over all partitions $\mu \in \Lambda$ such that $|\mu|=|\lambda|+1$ and $\lambda \subset \mu$ and the term $\sigma_{\lambda^*}$ is omitted if $\lambda^*$ does not exist.

\begin{remark}
There is a graph theoretic context for $\delta_0$. We will now recall the notion of the (oriented) {\it quantum Bruhat graph} $\mathcal{G}$ of $\Gr(k,n)$; see \cite{brenti.fomin}. The vertices of this graph consist of partitions  $\lambda \in \Lambda$. There is an oriented edge $\lambda \rightarrow \mu$ if the class $\sigma_\mu$ appears with a positive coefficient (possibly involving $q$) in the quantum Chevalley multiplication $\sigma_{(1)} \star \sigma_{\lambda}$. See Figure \ref{graphs} for examples. By the Perron-Frobenius Theory of nonnegative matrices, $\delta_0$ is equal to the largest real eigenvalue of $n$ times the incidence matrix of $\mathcal{G}$; see \cite{Riet,ChLi}. 
\end{remark}

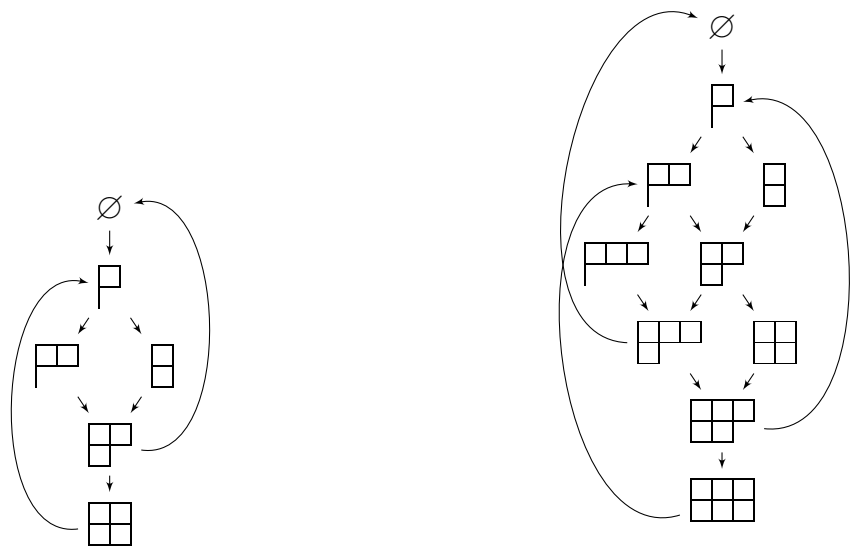
\begin{figure}
\begin{subfigure}[b]{.49\linewidth}
\centering
\begin{tikzpicture}[scale=.7]
\tikzset{edge/.style = {->,> = latex'}}
    \node (0) at (0,0) {$\tiny\emptyset$};
    \node (1) at (0,-1.5) {$\tiny\yng(1,0)$};
    \node (2) at (-1,-3) {$\tiny\yng(2,0)$};
    \node (3) at (1,-3) {$\tiny\yng(1,1)$};
    \node (4) at (0,-4.5) {$\tiny\yng(2,1)$};
    \node (5) at (0,-6) {$\tiny\yng(2,2)$};
  
    \draw [edge] (0) to (1);
    \draw [edge] (1) to (2);
    \draw [edge] (1) to (3);
    \draw [edge] (2) to (4);
    \draw [edge] (3) to (4);
    \draw [edge] (4) to (5);
    \draw [edge] (4) to [bend right=100] (0); 
    \draw [edge] (5) to [bend left=100](1); 
\end{tikzpicture}
\caption{Quantum Bruhat graph for $\Gr(2,4)$}\label{fig1a}
\end{subfigure}\hfill
\begin{subfigure}[b]{.49\linewidth}
\centering
\begin{tikzpicture}[scale=.7]
\tikzset{edge/.style = {->,> = latex'}}
    \node (0) at (0,0) {$\tiny\emptyset$};
    \node (1) at (0,-1.5) {$\tiny\yng(1,0)$};
    \node (2) at (-1,-3) {$\tiny\yng(2,0)$};
    \node (3) at (1,-3) {$\tiny\yng(1,1)$};
    \node (4) at (0,-4.5) {$\tiny\yng(2,1)$};
    \node (5) at (-2,-4.5) {$\tiny\yng(3,0)$};
    \node (6) at (-1,-6){$\tiny\yng(3,1)$};
    \node (7) at (1,-6){$\tiny\yng(2,2)$};
    \node (8) at (0,-7.5){$\tiny\yng(3,2)$};
    \node (9) at (0,-9){$\tiny\yng(3,3)$};
    \draw [edge] (0) to (1);
    \draw [edge] (1) to (2);
    \draw [edge] (1) to (3);
    \draw [edge] (2) to (4);
    \draw [edge] (3) to (4);
    \draw [edge] (2) to (5);
    \draw [edge] (5) to (6);
    \draw [edge] (4) to (6);
    \draw [edge] (4) to (7);
    \draw [edge] (6) to (8);
    \draw [edge] (7) to (8);
    \draw [edge] (8) to (9);
    \draw [edge] (9) to [bend left=100] (2); 
    \draw [edge] (8) to [bend right=100] (1);
    \draw [edge] (6) to [bend left=100] (0); 
\end{tikzpicture}
\caption{Quantum Bruhat graph for $\Gr(2,5)$}\label{fig1b}
\end{subfigure}%

\caption{Quantum Bruhat graphs}
\end{figure}
\label{graphs}

We are now ready to state our main theorem.
\begin{thm} \label{mainthm}
The largest real eigenvalue $\delta_0$ of $\hat c_1$ for $\Gr(k,n)$ is greater than or equal to $ \dim \Gr(k,n) +1 = k(n-k)+1$ with equality if and only if $\Gr(k,n)=\mathbb{P}^{n-1}$.
\end{thm}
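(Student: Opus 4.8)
The plan is to reduce the theorem to an explicit trigonometric inequality by first obtaining a closed formula for $\delta_0$. Since the anticanonical class is $n\sigma_{(1)}$, write $\hat c_1 = n\hat\sigma_{(1)}$, so $\delta_0 = n\rho$ where $\rho$ is the spectral radius of quantum multiplication by $\sigma_{(1)}$ at $q=1$. I would invoke the known semisimple structure of $\mathrm{QH}^*(\Gr(k,n))_{|q=1}$ (Vafa--Intriligator; see \cite{Riet,ChLi}): the eigenvalues of $\hat\sigma_{(1)}$ are precisely the $\binom nk$ sums $\sum_{j\in J}\omega_j$ as $J$ ranges over the $k$-element subsets of $\{0,\dots,n-1\}$, where $\omega_0,\dots,\omega_{n-1}$ are the $n$ roots of $z^n=(-1)^{k-1}$. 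A direct check against the graph in Figure \ref{fig1a} confirms this for $\Gr(2,4)$, where $\rho=\sqrt2$. The crucial observation is that the $\omega_j$ are $n$ \emph{equally spaced} unit vectors (a rotation of the $n$-th roots of unity), so the entire computation of $\rho$ is insensitive to the phase $(-1)^{k-1}$.

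Next I would identify $\rho = \max_J |\sum_{j\in J}\omega_j|$. Writing $\theta_j=\arg\omega_j$, one has $|\sum_{j\in J}\omega_j| = \max_\phi \sum_{j\in J}\cos(\theta_j-\phi)$, and for each fixed $\phi$ the optimal $J$ consists of the $k$ vectors nearest to the direction $\phi$. Because the $\theta_j$ are equally spaced, optimizing over $\phi$ forces the maximizing set to be a block of $k$ consecutive vectors centered at $\phi$; summing the resulting geometric series gives
\[ \rho = \left|\sum_{m=0}^{k-1} e^{2\pi i m/n}\right| = \frac{\sin(k\pi/n)}{\sin(\pi/n)}, \qquad\text{hence}\qquad \delta_0 = n\,\frac{\sin(k\pi/n)}{\sin(\pi/n)}. \]
This matches $\delta_0=n$ for $\mathbb{P}^{n-1}$ (the case $k=1$) and $\delta_0=4\sqrt2$ for $\Gr(2,4)$.

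It then remains to prove $n\sin(k\pi/n)/\sin(\pi/n)\ge k(n-k)+1$ for $1\le k\le n-1$, with equality exactly at $k\in\{1,n-1\}$. Writing the left side as $n\sum_{a\in A_k}\cos(a\pi/n)$ with $A_k=\{-(k-1),-(k-3),\dots,k-1\}$ and applying $\cos\theta\ge 1-\theta^2/2$ together with $\sum_{a\in A_k}a^2=k(k^2-1)/3$ yields
\[ \delta_0 - \big(k(n-k)+1\big) \ \ge\ (k^2-1)\left(1-\frac{\pi^2 k}{6n}\right). \]
The bound $\cos\theta\ge1-\theta^2/2$ is strict once some $a\neq0$, i.e.\ once $k\ge2$, so the right side is a strict lower bound for $\delta_0 - (k(n-k)+1)$ in that range. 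Hence $\delta_0 > k(n-k)+1$ whenever $2\le k< 6n/\pi^2$, while $k=1$ gives equality. The remaining values of $k$ are handled by the symmetry $\Gr(k,n)\cong\Gr(n-k,n)$: both $\delta_0$ and $k(n-k)+1$ are invariant under $k\mapsto n-k$, and if $6n/\pi^2\le k\le n-2$ then $2\le n-k<6n/\pi^2$ (using $1-6/\pi^2<6/\pi^2$), so the case already proved applies to $n-k$. This gives strict inequality for all $2\le k\le n-2$ and equality precisely at $k\in\{1,n-1\}$, that is, exactly for $\mathbb{P}^{n-1}$.

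The main obstacle is the rigorous justification of the formula $\delta_0=n\sin(k\pi/n)/\sin(\pi/n)$, and within it the combinatorial claim that the spectral radius of the quantum Bruhat graph is realized by a \emph{consecutive} block of the $\omega_j$ (the maximal-modulus $k$-subset sum). This requires both the eigenvalue description above and a careful proof that clustering maximizes $|\sum_{j\in J}\omega_j|$ among all $k$-subsets of equally spaced unit vectors; once this is in place, the remaining inequality is elementary calculus. By contrast, a direct Perron--Frobenius estimate through a test vector $v$ in the Collatz--Wielandt bound $\rho\ge\min_\lambda (Av)_\lambda/v_\lambda$ seems harder to control, since the natural optimal $v$ is exactly the Perron eigenvector encoded by the trigonometric data above.
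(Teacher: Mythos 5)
Your argument is correct and essentially coincides with the paper's second proof (the one supplied by the referee): both rest on the closed formula $\delta_0=n\sin(\pi k/n)/\sin(\pi/n)$, an elementary Taylor-type bound, and the symmetry $\Gr(k,n)\cong\Gr(n-k,n)$, and your max-swap justification of the consecutive-block claim is a correct rendering of Rietsch's Proposition 11.1 for $\lambda=(1)$, which the paper simply cites. The only substantive variation is that applying $\cos\theta\ge 1-\theta^2/2$ termwise to $n\sum_{a\in A_k}\cos(a\pi/n)$ lets you treat $k=2$ uniformly within the range $2\le k\le 6n/\pi^2$, whereas the paper's version works on $[3,n/2]$ with $\sin x\ge x-x^3/6$ and isolates $k=2$ via the double-angle formula; the paper's first proof is genuinely different, proceeding by calculus on $F^k(x)=\delta_0^k(x)-k(x-k)-1$ and induction on $k$.
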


We will give two proofs. One is due to the authors and the other is given to us by an anonymous referee. The value $\delta_0$ is explicitly computed by Rietsch in \cite{Riet} and will be the key ingredient to the first proof. The proof uses calculus in conjunction with the explicit computation of $\delta_0$. To the best of the author's knowledge, the study of $\delta_0$ and Conjecture $\mathcal{O}$ has occurred on a case-by-case basis by considering fixed Fano varieties. Our approach deviates from the case-by-case approach and uses the two parameters $k$ and $n$ to prove the main result. The second proof by the anonymous referee is shorter and makes use of elementary inequalities. The proof will be presented in the final section of the manuscript.

{\em Acknowledgements.} The third named author thanks Leonardo Mihalcea for useful discussions. We thank the anonymous referees for their comments and an alternative proof.

\section{Preliminaries}
 We follow \cite{Riet} to state the preliminary results. The eigenvalue $\delta_0$ of $\hat c_1$ is computed by evaluating a Schur function at a particular $k$-tuple. There are many definitions of Schur polynomials; we will use the version obtained from complete symmetric polynomials. Let $h_0,h_1, h_2, h_3, \cdots$ denote the complete symmetric polynomials in $k$ variables where $\deg h_i=i$ for $i \geq 0$ and let $h_i=0$ for $i <0$. Given $\lambda \in \Lambda$, the Schur polynomial $S_\lambda$ is given by
\[S_\lambda = \begin{vmatrix} 
h_{\lambda_1} & h_{\lambda_1+1} & \cdots & h_{\lambda_1+k-1} \\
h_{\lambda_2-1} & h_{\lambda_2} & \cdots & h_{\lambda_2+k-2} \\
 &  & \ddots &  \\
 h_{\lambda_k-k+1} & \cdots & \cdots & h_{\lambda_k} \\
\end{vmatrix}. \]
We refer the reader to \cite{Macdonald} for further details on Schur functions.
We now give a useful definition for the computation of $\delta_0$.
\begin{defn}[The set $\mathcal{I}_{k,n}$]
We fix the primitive $n$-th root of unity $\zeta=e^{\frac{2\pi i}{n}}$. Let $\zeta^I:=(\zeta^{i_1}, \cdots, \zeta^{i_k})$ be an unordered $k$-tuple of distinct $n$-th roots of $(-1)^{k+1}$. Then $I=(i_1,\cdots,i_k)$ maybe chosen uniquely such that $-\frac{k-1}{2} \leq i_1 < i_2<\cdots<i_k \leq n-\frac{k+1}{2}$ and the $i_k$'s are all integers (respectively half-integers) if $k$ is odd (resp. even). Denote the set of all $k$-tuples $I$ by $\mathcal{I}_{k,n}$.
%Define the set $sh(k,c)$ to be the set of partitions $\lambda=(\lambda_1, \cdots, \lambda_k)$ of length at most $k$ such that $\lambda_1 \leq c.$ 
\end{defn} 
It is known that $\mbox{QH}^\bullet(\Gr(k,n))$ is a finite dimensional vector space with basis given by (the restriction of) the Schubert basis $\{ \sigma_\lambda \}_{\lambda \in \Lambda}$. Rietsch presents another basis given by $\sigma_I:= \sum _{\nu \in \Lambda} \overline{S_{\nu}(\zeta^I)} \sigma_{\nu}$ for $I \in \mathcal{I}_{k,n}$. Consider the multiplication operator $[\sigma_\lambda]: \sigma \mapsto \sigma_\lambda \star \sigma$ on $\mbox{QH}^{\bullet}(\Gr(k,n))$. It follows that 
\[\sigma_\lambda \star \sigma_{I}=S_{\lambda}(\zeta^I)\sigma_I \]
and $\sigma_I$ is an eigenvector of the multiplication operator $[\sigma_\lambda]$ with eigenvalue $S_\lambda(\zeta^I)$. In particular, $\hat c_1 = n[\sigma_{(1)}]$ and $\delta_0=n \max \{ |S_{(1)}(\zeta ^I)| I \in \mathcal{I}_{k,n} \}.$
This leads us to a Lemma by Rietsch which explicitly computes $\delta_0$.
\begin{lemma}[\cite{Riet} Proposition 11.1]
Let $I_0=(-\frac{k-1}{2}, \cdots, \frac{k-1}{2})$ and $I \in \mathcal{I}_{k,n}$ any other elements, and let $\zeta=e^{\frac{2 \pi i}{n}}$ as before. Then for any partitions $\lambda \in \Lambda$, \[|S_{\lambda}(\zeta^I)| \leq S_\lambda(\zeta^{I_0}). \]
In particular, $\delta_0=n S_\lambda(\zeta^{I_0}) \in \mathbb{R}.$
\end{lemma}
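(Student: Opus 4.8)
The plan is to separate the two assertions: first that $S_\lambda(\zeta^{I_0})$ is a positive real number (which simultaneously pins down the value of $\delta_0$), and second the inequality $|S_\lambda(\zeta^I)| \le S_\lambda(\zeta^{I_0})$ for every $I \in \mathcal{I}_{k,n}$. I expect the first part to be routine and the inequality to be the crux.

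For reality and positivity I would compute $S_\lambda(\zeta^{I_0})$ from the bialternant (Weyl) form $S_\lambda(x) = \det(x_j^{a_i})/\det(x_j^{k-i})$ with $a_i = \lambda_i + k - i$. Since $\zeta^{I_0} = (\zeta^{-(k-1)/2}, \ldots, \zeta^{(k-1)/2})$ is a geometric progression with ratio $\zeta$, both determinants become Vandermonde determinants, in the $\zeta^{a_i}$ and the $\zeta^{k-i}$ respectively. Writing each factor as $\zeta^{b} - \zeta^{c} = \zeta^{(b+c)/2}\cdot 2i\sin(\pi(b-c)/n)$, the homogeneity of $S_\lambda$ forces all the phase prefactors to cancel, leaving the closed product
\[ S_\lambda(\zeta^{I_0}) = \prod_{1\le i<j\le k} \frac{\sin\bigl(\pi(a_i - a_j)/n\bigr)}{\sin\bigl(\pi(j-i)/n\bigr)}. \]
Every factor is positive, since $0 < a_i - a_j = (\lambda_i-\lambda_j)+(j-i) \le (n-k)+(k-1) = n-1$ and $0 < j - i < n$ place both sines in $(0,\pi)$; hence $S_\lambda(\zeta^{I_0}) > 0$. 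Reality may also be seen abstractly before any computation: $I_0$ is invariant under negation, so the multiset $\zeta^{I_0}$ is closed under complex conjugation, and as $S_\lambda$ has real coefficients, $\overline{S_\lambda(\zeta^{I_0})} = S_\lambda(\overline{\zeta^{I_0}}) = S_\lambda(\zeta^{I_0})$.

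The inequality is the main obstacle, and I would approach it through the monomial expansion $S_\lambda(\zeta^I) = \sum_T \zeta^{\langle \mathrm{wt}(T),\, I\rangle}$ over semistandard tableaux, using $|S_\lambda(\zeta^I)| = \max_\theta \sum_T \cos\bigl(2\pi\langle \mathrm{wt}(T),I\rangle/n - \theta\bigr)$ and arguing that the maximally clustered configuration $I_0$ maximizes the right-hand side. For the only case actually needed for $\delta_0$, namely $\lambda=(1)$, this is transparent: $S_{(1)}(\zeta^I) = \sum_{j=1}^k \zeta^{i_j}$ is a sum of $k$ distinct unit vectors drawn from the $n$ equally spaced $n$-th roots of $(-1)^{k+1}$, and the magnitude of such a sum is largest when the chosen roots are consecutive and symmetric about a point, i.e. at $I_0$, giving the Dirichlet-kernel value $S_{(1)}(\zeta^{I_0}) = \sin(k\pi/n)/\sin(\pi/n)$; this follows from a short rearrangement estimate, since for each fixed direction $\theta$ the $k$ largest values of $\cos(2\pi m/n - \theta)$ are attained by $k$ consecutive roots. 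I expect the genuine difficulty to lie in the general $\lambda$: the clustering principle fails for arbitrary symmetric functions with nonnegative coefficients (for instance power sums), so one must exploit Schur-specific structure. Concretely, I would try to reduce the general inequality to a monotonicity/majorization statement for the sine products above, tracking how $|S_\lambda(\zeta^I)|$ decreases as the gaps $i_j - i_{j'}$ are pushed away from their minimal consecutive values $I_0$, which is the content of Rietsch's Proposition~11.1.
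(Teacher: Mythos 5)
The paper does not prove this lemma at all: it is imported verbatim as Rietsch's Proposition 11.1, so there is no in-paper argument to compare against. Measured against the statement itself, your proposal is solid on two of its three components. The reality/positivity computation is correct: specializing the bialternant at the geometric progression $\zeta^{I_0}$ does collapse both determinants to Vandermonde form, the phases cancel by homogeneity exactly as you say, and the product $\prod_{i<j}\sin\bigl(\pi(a_i-a_j)/n\bigr)/\sin\bigl(\pi(j-i)/n\bigr)$ has every factor in $(0,\pi)$ under the sine, hence is positive. Your $\lambda=(1)$ argument is also complete: for each direction $\theta$ the $k$ largest values of $\cos$ over the $n$ equally spaced admissible angles are attained at $k$ consecutive ones, and optimizing $\theta$ gives the Dirichlet-kernel value at the symmetric cluster $I_0$. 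Since $\delta_0=n\max_I|S_{(1)}(\zeta^I)|$, this already justifies everything the rest of the paper actually uses (note the lemma's final display has a typo --- it should read $\delta_0=nS_{(1)}(\zeta^{I_0})$ --- which you implicitly correct).

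The genuine gap is the inequality for general $\lambda$. You correctly diagnose that the naive clustering heuristic fails for arbitrary nonnegative symmetric functions, but your proposed remedy --- ``reduce to a monotonicity/majorization statement \ldots which is the content of Rietsch's Proposition 11.1'' --- is circular: it defers to the very result being proved. A workable route (and essentially Rietsch's) is to note that $\sigma_{(1)}$-multiplication has nonnegative structure constants on the Schubert basis, so Perron--Frobenius applies to the irreducible nonnegative matrix of $\hat c_1$; the eigenvector $\sigma_{I_0}$, whose coordinates are $\overline{S_\nu(\zeta^{I_0})}$, is the one with all entries positive (by your sine-product computation), hence corresponds to the Perron eigenvalue, and the standard comparison of the Perron eigenvector's coordinates with those of the other simultaneous eigenvectors yields $|S_\lambda(\zeta^I)|\le S_\lambda(\zeta^{I_0})$ for every $\lambda$. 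Without some such input, your proof establishes the lemma only for $\lambda=(1)$.
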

\begin{remark} 
Another formulation is $\delta_0=n \frac{\sin(\pi k/n)}{\sin(\pi/n)}$. See \cite[Lemma 9.21]{GaIr}. This is the formulation that the anonymous referee uses in their proof.
\end{remark}
The number $\delta_0=n S_{(1)}(\zeta^{I_0})$ is computed directly for $\Gr(k,n)$. For notation, let $\delta_0^k(n)=\delta_0$ for $\Gr(k,n)$.  If $k$ is even then we have that
\begin{eqnarray*}
\delta^k_0(n)%&=& \cos\left(\frac{(1-k)\pi}{n}\right)+ \cos\left(\frac{(3-k)\pi}{n}\right)+\cdots+ \cos\left(\frac{((2k-1)-k)\pi}{n}\right)\\
%&=&2\cos\left(\frac{(1-k)\pi}{n}\right)+ 2\cos\left(\frac{(3-k)\pi}{n}\right)+\cdots+ 2\cos\left(\frac{((2\frac{k}{2}-1)-k)\pi}{n}\right)\\
%&=&2\cos\left(\frac{(k-1)\pi}{n}\right)+ 2\cos\left(\frac{(k-3)\pi}{n}\right)+\cdots+ 2\cos\left(\frac{(k-(2\frac{k}{2}-1))\pi}{n}\right)\\
&=&2n \sum_{j=1}^{\frac{k}{2}}  \cos\left( \frac{(k-(2j-1)) \pi }{n} \right).
\end{eqnarray*}
If $k$ is odd then we have that
\begin{eqnarray*}
\delta^k_0(n)=n+ 2n \sum_{j=1}^{\frac{k-1}{2}}  \cos\left( \frac{(k-(2j-1)) \pi }{n} \right).
\end{eqnarray*}
To prove Galkin's bound for $\Gr(k,n)$ it suffices to show that $\delta^k_0(n)-k(n-k)-1 \geq 0$ for all $k \in \mathbb{Z}^+$ with equality if and only if $k=1$ or $k=n-1$. Thus it is natural to consider the family of real valued functions $F^k(x): \mathbb{R} \rightarrow \mathbb{R}$, indexed by $k \in \mathbb{Z}^+$, that are defined by $F^k(x)=\delta_0^k(x)-k(x-k)-1$ where

\begin{eqnarray*}
\displaystyle \delta_0^{k}(x)= \begin{cases} 
     \displaystyle 2x \sum_{j=1}^{\frac{k}{2}}  \cos\left( \frac{(k-(2j-1)) \pi }{x} \right), & k \mbox{ even} \\
    \displaystyle  x+ 2x \sum_{j=1}^{\frac{k-1}{2}}  \cos\left( \frac{(k-(2j-1)) \pi }{x} \right), & k \mbox{ odd.}
   \end{cases}
\end{eqnarray*}
The next Lemma is a technical Lemma that allows us to reduce the number of cases that need to be checked for Galkin's lower bound to hold.
\begin{lemma} \label{sufflemma}
Let $k \geq 4$. It suffices to show that Galkin's lower bound holds for $\Gr(k,n)$ where $n \geq 2(k-1)$.
\end{lemma}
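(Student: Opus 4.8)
The plan is to exploit the classical duality isomorphism $\Gr(k,n) \cong \Gr(n-k,n)$, which manifests at the level of our invariants as the symmetry $F^k(n) = F^{n-k}(n)$, and to organize the overall argument as an induction on $k$ so that the small values of $n$ get absorbed into previously settled (smaller-index) cases.

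First I would record the symmetry. Using the closed form $\delta_0^k(n) = n\sin(\pi k/n)/\sin(\pi/n)$ from the Remark above, together with the identity $\sin(\pi(n-k)/n) = \sin(\pi - \pi k/n) = \sin(\pi k/n)$, one gets $\delta_0^k(n) = \delta_0^{n-k}(n)$. Since also $\dim\Gr(k,n) = k(n-k) = (n-k)k = \dim\Gr(n-k,n)$, the entire Galkin defect is symmetric: $F^k(n) = \delta_0^k(n) - k(n-k) - 1 = \delta_0^{n-k}(n) - (n-k)k - 1 = F^{n-k}(n)$. In particular, Galkin's lower bound holds for $\Gr(k,n)$ if and only if it holds for $\Gr(n-k,n)$, and the equality locus $\{k=1\}\cup\{k=n-1\}$ is preserved under $k\mapsto n-k$.

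Next I would isolate the cases the Lemma allows us to discard. Fix $k \geq 4$. The admissible range is $n \geq k+1$, so the cases \emph{not} covered by $n \geq 2(k-1)$ are precisely $k+1 \leq n \leq 2k-3$. For such $n$ set $k' := n - k$. Then $1 \leq k' \leq k-3$, so $k' < k$. By the symmetry of the previous step, verifying Galkin's bound for $\Gr(k,n)$ in this range is equivalent to verifying it for $\Gr(k',n)$ with strictly smaller first index. Running the whole argument as an induction on $k$ (with $k=1,2,3$ as base cases), these dual instances are already established, so only $n \geq 2(k-1)$ survives, which is exactly the claim.

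I do not expect a genuine obstacle here; the substance is entirely the bookkeeping of the duality and the index inequalities. The one point to state carefully is the inductive scaffolding: the Lemma is a reduction step, and its force (``it suffices'') presupposes that all cases with smaller first index have been handled, which is legitimate under induction on $k$. As a consistency check that avoids invoking the induction hypothesis and instead appeals directly to the main calculus estimate for index $k'$, note that for $k+1 \leq n \leq 2k-3$ one has $n \leq 2k-3 \leq 2k+2$, equivalently $n \geq 2(n-k-1) = 2(k'-1)$; hence the dual pair $\Gr(k',n)$ again lands in the ``$n \geq 2(\text{index}-1)$'' regime, matching the hypothesis of the main estimate whenever $k' \geq 4$, with $k' \in \{1,2,3\}$ forming base cases.
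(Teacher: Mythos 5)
Your proposal is correct and uses essentially the same argument as the paper: apply the duality $\Gr(k,n)\cong\Gr(n-k,n)$ to the uncovered range $k+1\le n\le 2k-3$ and check that the dual index $k'=n-k$ satisfies $n\ge 2(k'-1)$ (equivalently $n\le 2k+2$), so the dual instance falls into the regime already handled. The inductive scaffolding you add is harmless but not needed, since your own consistency check shows directly that every dual case lands in the covered regime rather than merely in a smaller-index case.
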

\begin{proof}
Consider $\Gr(k,n)$ where $k+1 \leq n \leq 2(k-1).$ Then $\Gr(k,n)$ is isomorphic to $\Gr(\ell,n)$ where $\ell=n-k$. The Lemma follows from the observation that $2k+2>2k-2$ and $2k+2 \geq n \Leftrightarrow n \geq 2(l-1)$.
%\begin{eqnarray*}
%2k+2 \geq n 
%\Leftrightarrow n \geq 2n-2k-2
%\Leftrightarrow n \geq 2((n-k)-1) 
%\Leftrightarrow n \geq 2(l-1)
%\end{eqnarray*}
\end{proof}

\section{Proof of the Main Theorem}

\subsection{Calculus on the functions $F^k(x)$}
In this section, we use calculus to explore features of the function $F^k$ to further understand the number $\delta_0^k(n)$. We will prove Proposition \ref{propinc} by using Lemma \ref{techlemma} and Lemma \ref{lemmacalc}.

\begin{prop}\label{propinc}
Let $k \geq 2$. The function $F^k$ is increasing for $x \in (2(k-1),\infty).$
\end{prop}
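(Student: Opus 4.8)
The plan is to show directly that $\frac{d}{dx}F^k(x) > 0$ for all $x \in (2(k-1),\infty)$, which yields that $F^k$ is strictly increasing (hence increasing) on that interval. Since $F^k(x) = \delta_0^k(x) - k(x-k) - 1$, the affine part $-k(x-k)-1$ contributes only $-k$ to the derivative, so the whole problem reduces to differentiating $\delta_0^k$ and comparing the result with $k$.

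The key computational observation is that $\delta_0^k$ is assembled from blocks of the form $2x\cos(m\pi/x)$ (together with a single linear term $x$ when $k$ is odd), where $m$ ranges over the integers $m_j = k-(2j-1)$. Differentiating one block gives
\[
\frac{d}{dx}\Big(2x\cos\frac{m\pi}{x}\Big) = 2\cos\frac{m\pi}{x} + \frac{2m\pi}{x}\sin\frac{m\pi}{x} = 2\,\varphi\!\left(\frac{m\pi}{x}\right),
\]
where I set $\varphi(t) := \cos t + t\sin t$, while the linear term $x$ simply contributes $1$. Thus for $k$ even one obtains $\frac{d}{dx}\delta_0^k(x) = 2\sum_{j=1}^{k/2}\varphi(m_j\pi/x)$, and for $k$ odd one obtains $\frac{d}{dx}\delta_0^k(x) = 1 + 2\sum_{j=1}^{(k-1)/2}\varphi(m_j\pi/x)$.

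The heart of the argument is the elementary inequality $\varphi(t) > 1$ for $t \in (0,\pi/2)$, which I would isolate as a short lemma (presumably the role of Lemma \ref{lemmacalc}). It holds because $\varphi(0) = 1$ and $\varphi'(t) = t\cos t > 0$ on $(0,\pi/2)$, so $\varphi$ is strictly increasing and stays above its value at $0$. Granting this, each summand $\varphi(m_j\pi/x)$ exceeds $1$, whence in the even case $\frac{d}{dx}\delta_0^k > 2\cdot(k/2) = k$ and in the odd case $\frac{d}{dx}\delta_0^k > 1 + 2\cdot((k-1)/2) = k$. In both parities $\frac{d}{dx}F^k(x) = \frac{d}{dx}\delta_0^k(x) - k > 0$, as desired.

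The step I expect to be the real obstacle — and the reason the statement is restricted to $x > 2(k-1)$ — is verifying that every argument $m_j\pi/x$ genuinely lies in $(0,\pi/2)$, so that $\varphi > 1$ applies term by term. The largest index occurring is $m = k-1$, and $m\pi/x < \pi/2$ precisely when $x > 2m$; since $x > 2(k-1) \geq 2m_j$ for every $j$, all arguments stay strictly below $\pi/2$. This is exactly where the hypothesis bites: outside this range the cosine contributions can turn negative and $\varphi$ may dip below $1$, so the clean per-term comparison would break down. (This dovetails with Lemma \ref{sufflemma}, which already reduced Galkin's bound to the regime $n \geq 2(k-1)$.) I would therefore open the proof by recording that $m_j\pi/x \in (0,\pi/2)$ whenever $x > 2(k-1)$ — likely the content of Lemma \ref{techlemma} — and then feed this into the term-by-term estimate above.
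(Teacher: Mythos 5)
Your argument is correct, but it is a genuinely different route from the paper's. The paper does not touch the first derivative directly: it computes the second derivative (where the $\sin$ terms cancel, leaving $\frac{d^2}{dx^2}F^k(x)=-\sum_j \frac{(k-(2j-1))^2\pi^2}{x^3}\cos\bigl(\frac{(k-(2j-1))\pi}{x}\bigr)<0$ on $(2(k-1),\infty)$), computes $\lim_{x\to\infty}F^k(x)=k^2-1$, and then invokes the abstract Lemma \ref{techlemma} --- a concave-down function with a finite horizontal asymptote must have positive derivative --- whose proof occupies Section \ref{LEMMA} via repeated applications of the Mean Value Theorem. You instead differentiate once, write each block's derivative as $2\varphi(m_j\pi/x)$ with $\varphi(t)=\cos t+t\sin t$, and use $\varphi'(t)=t\cos t>0$ to get $\varphi>1$ on $(0,\pi/2)$, so that the derivative of $\delta_0^k$ exceeds $k$ term by term; the hypothesis $x>2(k-1)$ enters in exactly the same place in both arguments (it forces all angles $m_j\pi/x$ into $(0,\pi/2)$). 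Your guesses about which paper lemma plays which role are off --- Lemma \ref{techlemma} is the asymptote-plus-concavity criterion and Lemma \ref{lemmacalc} is the limit/concavity computation, neither is your $\varphi>1$ inequality or the angle bound --- but that does not affect the mathematics. What each approach buys: the paper's route exploits the cleaner form of the second derivative at the cost of an auxiliary real-analysis lemma and a full section proving it; your route is self-contained and shorter, handling the messier first derivative with the single observation that $\cos t+t\sin t$ is increasing on $(0,\pi/2)$, and it would let one delete Section \ref{LEMMA} entirely.
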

The proof of the first Lemma is reserved until Section \ref{LEMMA}. The proof involves several applications of the Mean Value Theorem.

\begin{lemma} \label{techlemma}
Let $f:\mathbb{R} \rightarrow \mathbb{R}$ be a second differentiable function where $\displaystyle \lim_{x \rightarrow \infty}f(x)=L\in \mathbb{R}$. If $f''(x)<0$ for all $x \in (a,\infty)$ then $f'(x)>0$ for all $x \in (a,\infty)$.
\end{lemma}
The next Lemma follows by nice properties of the function $F^k.$ The ability to calculate the $\displaystyle \lim_{x \rightarrow \infty} F^k(x)$ is a straight forward application of l'H\^{o}pital's rule after slightly reorganizing the expression. The authors anticipate that similar asymptotic behavior will occur in at least the submaximal isotropic Grassmannians in types $B$, $C$, and $D$. The second derivative of $F^k(x)$ takes on a nice form since the $\sin$ functions that appear in the first derivative cancel out in the calculation of the second derivative.

\begin{lemma} \label{lemmacalc}
 
We have that 
\begin{enumerate}
\item $\displaystyle \lim_{x \rightarrow \infty} F^k(x)=k^2-1$;
\item Let $k \geq 2$. The function $F^k(x)$ is concave down on the interval $(2(k-1), \infty)$;
\item The equality $F^k(2(k-1)) = F^{k-2}(2(k-1))$ holds for $k \geq 3$.
\end{enumerate}
\end{lemma}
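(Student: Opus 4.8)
The plan is to treat each part by reducing to the explicit trigonometric expression for $\delta_0^k(x)$ and exploiting that $F^k(x)=\delta_0^k(x)-kx+k^2-1$ differs from $\delta_0^k(x)$ only by a linear polynomial. Throughout I would write $c_j:=k-(2j-1)$ and $\theta_j:=c_j\pi/x$, noting that as $j$ ranges over its summation index the numbers $c_j$ run through the odd integers $1,3,\dots,k-1$ when $k$ is even and through the even integers $2,4,\dots,k-1$ when $k$ is odd; in either case the largest is $c_1=k-1$.

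For part (1), I would first absorb the $-kx$ coming from $-k(x-k)$ into the cosine sum. Since the even-case sum has $k/2$ terms and since in the odd case $x+2x\cdot\tfrac{k-1}{2}=kx$, in both cases $\delta_0^k(x)-kx=2x\sum_j(\cos\theta_j-1)$. Writing $t=1/x$, each summand becomes $2(\cos(c_j\pi t)-1)/t$, a $0/0$ form as $t\to 0^+$; a single application of l'H\^opital (equivalently, the estimate $\cos\theta_j-1=-\tfrac12\theta_j^2+O(\theta_j^4)$) shows each term tends to $0$. Hence $\delta_0^k(x)-kx\to 0$ and $F^k(x)\to k^2-1$.

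For part (2), since the added linear term has vanishing second derivative, I would compute $(F^k)''=(\delta_0^k)''$ directly. Differentiating $2x\cos\theta_j$ twice (using $\theta_j'=-\theta_j/x$), the first-derivative sine contributions cancel against one of the terms produced in the second differentiation — this is the cancellation advertised before the Lemma — leaving the clean expression
\[ (\delta_0^k)''(x)=-\frac{2\pi^2}{x^3}\sum_j c_j^2\cos\!\left(\frac{c_j\pi}{x}\right), \]
where the extra $+x$ in the odd case contributes nothing. For $x>2(k-1)$ every argument satisfies $0<c_j\pi/x\le (k-1)\pi/x<\pi/2$, so every cosine is positive, the sum is positive, and therefore $(F^k)''(x)<0$ on $(2(k-1),\infty)$.

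For part (3), I would compare the summation index sets for $k$ and $k-2$, which have the same parity: the collection $\{c_j\}$ for $k$ is exactly that for $k-2$ together with the single extra value $c=k-1$. Consequently $\delta_0^k(x)-\delta_0^{k-2}(x)=2x\cos((k-1)\pi/x)$ (the common $+x$ terms cancel in the odd case). At $x=2(k-1)$ the argument equals $\pi/2$, so this difference vanishes; and a short computation shows the polynomial parts agree there, namely $k(2(k-1)-k)=(k-2)(2(k-1)-(k-2))=k^2-2k$. Combining these gives $F^k(2(k-1))=F^{k-2}(2(k-1))$. The computations are all elementary; the main thing to get right is the bookkeeping of the two parities and, in part (2), verifying that the sine terms indeed cancel so that the sign of $(\delta_0^k)''$ is governed purely by the positivity of the cosines on $(0,\pi/2)$. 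I expect that last cancellation to be the only place where a careless derivative would derail the argument.
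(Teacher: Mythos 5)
Your proposal is correct and follows essentially the same route as the paper: part (1) via the same regrouping $\delta_0^k(x)-kx=2x\sum_j(\cos\theta_j-1)$ and l'H\^opital, and part (2) via the same cancellation of the sine terms and the observation that all arguments lie in $(0,\pi/2)$ for $x>2(k-1)$ (your second derivative correctly carries the factor of $2$ that the paper's displayed formula drops, which does not affect the sign argument). The only cosmetic difference is part (3), where the paper invokes the isomorphism $\Gr(k,2(k-1))\cong\Gr(k-2,2(k-1))$ and relegates to a remark the direct computation that you carry out explicitly.
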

\begin{proof}
For the first part observe that l'H\^{o}pital's rule implies that \[ \lim_{x \rightarrow \infty} 2x\left( \cos\left( \frac{(k-(2j-1)) \pi }{x} \right)-1\right)=0 \mbox{ for any integer }j, \mbox{ for }1 \leq j \leq \frac{k-1}{2}.\]
If $k$ is even then
\begin{eqnarray*}
\lim_{x\rightarrow \infty} F^k(x) &=& \lim_{x \rightarrow \infty} \left( \sum_{j=1}^{\frac{k}{2}}  2x\left( \cos\left( \frac{(k-(2j-1)) \pi }{x} \right)-1\right) \right)+k^2-1\\
&=&k^2-1.
\end{eqnarray*}
In a similar fashion, if $k$ is odd, it follows that $\displaystyle\lim_{x\to \infty}F^k(x)=k^2-1$ by a simple calculation.

For the second part a direct calculation shows that
\begin{eqnarray*}
\displaystyle \frac{d^2}{dx^2}F^k(x)&=&-\sum_{j=1}^{\frac{k}{2}}  \left(\frac{(k-(2j-1))^2 \pi^2}{x^3}\right)\cos\left( \frac{(k-(2j-1)) \pi }{x} \right).
\end{eqnarray*}
Since $0 < k+1-2j \le k-1$ for $j=1,2,\cdots,\frac{k}{2}$ then
\begin{eqnarray*}
%0 < 2(k-(2j-1))\le 2(k-1)\\
%0 < \frac{k-(2j-1)}{2(k-1)}\le \frac{1}{2}\\
0 < \frac{(k-(2j-1))\pi}{2(k-1)}\le \frac{\pi}{2}.
\end{eqnarray*}
It follows that $0 < \frac{(k-(2j-1))\pi}{x}< \frac{\pi}{2}$ for all $x \in (2(k-1),\infty)$. Thus $0< \cos\left( \frac{(k-(2j-1)) \pi }{x} \right) < 1$ for all $x\in (2(k-1),\infty)$. Therefore $\frac{d^2}{dx^2}F^k(x)<0$ for all $x \in (2(k-1),\infty)$.
The odd case can be shown in a similar fashion.

The third part of the proof follows from the isomorphism $\Gr(k,2(k-1)) \cong \Gr(k-2,2(k-1))$.
\end{proof}
\begin{remark}
Part 3) of the proof can easily be proved analytically since the equation $F^k(x)=F^{k-2}(x)$ simplifies to
\[x\left({\cos\left({\frac{(k-1)\pi}{x}}\right)}\right)+2(k-1)-x=0. \] The number $x=2(k-1)$ is a root.
\end{remark}

Proposition \ref{propinc} follows from Lemmas \ref{techlemma} and \ref{lemmacalc}. 

\begin{figure}[h!]
    \centering
    \caption{The function $F^k(x)$ plotted for $k=2,4,6,8$.}
    \label{fig:my_label}
\begin{tikzpicture}[scale=.5]
\begin{axis}[
    axis lines = left,
    xlabel = $x$,
    ylabel = {$F^k$},
    xmin=3, xmax=100,
    ymin=0,ymax=70,
    width=.9\textwidth,
    height=0.6\textwidth,
    legend style={at={(axis cs:10,40)},anchor=south west}
]
%Below f_{2} is defined
\addplot [
    domain=3:100, 
    samples=100, 
    color=red,
]
{2*x*(cos(deg(((2-(2-1))*(3.14))/(x))))-2*(x-2)-1};
%{cos(1/x)};
%{cos(deg(x))};
\addlegendentry{$F^{2}(x)$}
%Below f_{4} is defined
\addplot [
    domain=5:100, 
    samples=100, 
    color=blue,
]
{2*x*cos(deg(((4-(2-1))*(3.14))/(x)))+2*x*cos(deg(((4-(4-1))*(3.14))/(x)))- 4*(x-4)-1};
%{cos(deg(3.14 * x))};
%{cos(deg(x))};
\addlegendentry{$F^{4}(x)$}

%Below f_{6} is defined
\addplot [
    domain=7:100, 
    samples=100, 
    color=green,
]
{2*x*cos(deg(((6-(2-1))*(3.14))/(x)))+2*x*cos(deg(((6-(4-1))*(3.14))/(x)))+2*x*cos(deg(((6-(6-1))*(3.14))/(x)))- 6*(x-6)-1};
%{cos(deg(3.14 * x))};
%{cos(deg(x))};
\addlegendentry{$F^{6}(x)$}
%Below f_{8} is defined
\addplot [
    domain=9:100, 
    samples=100, 
    color=orange,
]
{2*x*cos(deg(((8-(2-1))*(3.14))/(x)))+2*x*cos(deg(((8-(4-1))*(3.14))/(x)))+2*x*cos(deg(((8-(6-1))*(3.14))/(x)))+2*x*cos(deg(((8-(8-1))*(3.14))/(x)))- 8*(x-8)-1};
\addlegendentry{$F^{8}(x)$}

\end{axis}
\end{tikzpicture}
\end{figure}
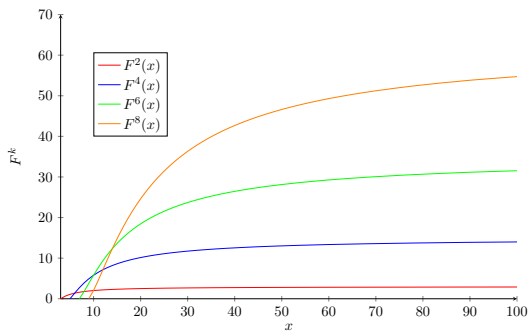

\subsection{}
The next proposition follows by using induction and the property $F^k$ is increasing for $x \in (2(k-1),\infty)$ when $k \geq 2$.
\begin{prop} \label{mainprop}
Galkin's lower bound holds for 
\begin{enumerate}
\item $\Gr(1,n)$ (which is isomorphic to $\mathbb{P}^{n-1})$ where $n \geq 2$;
\item $\Gr(2,n)$ where $n \geq 3$;
\item $\Gr(3,n)$ where $n \geq 4$;
\item $\Gr(k,n)$ where $n \geq 2(k-1)$ and $k\geq 4$.
\end{enumerate}
\end{prop}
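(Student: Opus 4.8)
The plan is to use the reformulation already set up in the excerpt: Galkin's bound for $\Gr(k,n)$ is equivalent to $F^k(n)\ge 0$, with equality exactly when $\Gr(k,n)=\mathbb{P}^{n-1}$. I would then treat the four items as a direct evaluation (item (1)), two base cases (items (2) and (3)), and an induction stepping by two in $k$ (item (4)).

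For item (1) I would simply evaluate: since $k=1$ is odd the defining sum is empty, so $\delta_0^1(n)=n$ and hence $F^1(n)=n-(n-1)-1=0$; as $\Gr(1,n)=\mathbb{P}^{n-1}$ this is the required equality. For items (2) and (3) I would combine one explicit value with Proposition \ref{propinc}. A direct computation gives $F^2(3)=0$ and $F^3(4)=0$ (using $\cos(\pi/3)=\tfrac12$ and $\cos(\pi/2)=0$), matching $\Gr(2,3)=\mathbb{P}^2$ and $\Gr(3,4)=\mathbb{P}^3$. Since $F^2$ is increasing on $(2,\infty)$ and $F^3$ on $(4,\infty)$, strict monotonicity --- together with continuity at the endpoint $x=4$ in the case $k=3$ --- forces $F^2(n)>0$ for every integer $n\ge 4$ and $F^3(n)>0$ for every integer $n\ge 5$, giving exactly the stated equality criterion.

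The heart of the matter is item (4), which I would prove by strong induction on $k\ge 4$, using items (2) and (3) as the base cases for the two parities. The engine is the identity $F^k(2(k-1))=F^{k-2}(2(k-1))$ of Lemma \ref{lemmacalc}(3), which relates the value of $F^k$ at the left endpoint of its interval of increase to a value of $F^{k-2}$ that the induction has already controlled. Since $2(k-1)=2k-2\ge 2(k-3)=2\bigl((k-2)-1\bigr)$ and $\Gr(k-2,2k-2)$ is not a projective space for $k\ge 4$, the inductive hypothesis yields $F^{k-2}(2(k-1))>0$, hence $F^k(2(k-1))>0$. Proposition \ref{propinc} makes $F^k$ increasing on $(2(k-1),\infty)$, so $F^k(n)\ge F^k(2(k-1))>0$ for all integers $n\ge 2(k-1)$ --- the strict inequality demanded, since none of these Grassmannians is a projective space.

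The main obstacle I expect is bookkeeping rather than genuine analysis: I must check that the point $2(k-1)$ at which Lemma \ref{lemmacalc}(3) is invoked lies to the right of the threshold $2((k-2)-1)$ for $F^{k-2}$, so that the inductive hypothesis applies, and that $\Gr(k-2,2(k-1))$ is genuinely non-projective for every $k\ge 4$ (it equals $\mathbb{P}^{2k-3}$ only when $k-2=1$ or $k-2=2k-3$, i.e. $k=3$ or $k=1$), so that the transferred inequality is strict. A secondary care point is that Proposition \ref{propinc} gives monotonicity only on the open interval $(2(k-1),\infty)$; I would invoke continuity of $F^k$ at the endpoint to secure the bound also at $n=2(k-1)$.
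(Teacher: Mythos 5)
Your proof is correct and follows essentially the same route as the paper's: explicit evaluation for $k=1$, the base values $F^2(3)=F^3(4)=0$ combined with Proposition \ref{propinc}, and an induction stepping by two in $k$ driven by the identity $F^k(2(k-1))=F^{k-2}(2(k-1))$ together with monotonicity. The only (harmless) differences are that you seed the induction from items (2) and (3) rather than computing $F^4(6)=6\sqrt{3}-9$ directly as the paper does, and you are slightly more careful than the paper about extending the strict inequality from the open interval $(2(k-1),\infty)$ to the endpoint via continuity.
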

\begin{proof}
The first case is known and can easily be seen in this context by evaluating $F^1(x)=0$ for $x \geq 2$.

For the second case where $k=2$ observe that $\Gr(2,3)$ is isomorphic to $\mathbb{P}^2$ and a direct calculation shows that $F^2(3)=0$. Since $F^2(x)$ is increasing for $x>2$ it follows that $F^2(x)>0$ for $x>3$. Observe that $\Gr(2,n)$ is not a projective space for $n>3$. Case (2) follows.

 For the third case where $k=3$ observe that $\Gr(3,4)$ is isomorphic to $\mathbb{P}^3$ and a direct calculation shows that $F^3(4)=0$. Since $F^3(x)$ is increasing for $x>4$ it follows that $F^3(x)>0$ for $x>4$. Observe that $\Gr(3,n)$ is not a projective space for $n>4$. Case (3) follows.

For the fourth case, it suffices to show that $F^k(2(k-1))>0$ since $F^k(x)$ is increasing for $x >2(k-1)$. We will use induction to show this inequality. A direct calculation shows that $F^4(6)=6\sqrt{3}-9>0.$ Assume that $F^k(2(k-1))>0$ where $k>4$ is even. Since $F^k(x)$ is increasing for $x \geq 2(k-1)$ then we have that \[F^{k+2}(2((k+2)-1))=F^{k}(2((k+2)-1))>F^k(2(k-1))>0. \] Observe that $\Gr(k,n)$ where $k \geq 4$ and $n \geq 2(k-1)$ is not a projective space. The even case follows. The case where $k \geq 5$ is odd follows by implementing a similar strategy.
\end{proof}

Theorem \ref{mainthm} follows from Lemma \ref{sufflemma} and Proposition \ref{mainprop}.

\begin{figure}[h!]
%\begin{subfigure}[b]{.47\linewidth}
    \centering
    \caption{Illustration of Proposition \ref{mainprop}}
    \label{fig:my_label}

\begin{tikzpicture}[scale=.5]
\begin{axis}[
    axis lines = left,
    xlabel = $x$,
    ylabel = {$F^k$},
    xmin=3, xmax=20,
    ymin=0,ymax=20,
    width=.9\textwidth,
    height=0.6\textwidth,
    legend style={at={(axis cs:6,13)},anchor=south west}
]
%Below f_{2} is defined
\addplot [
    domain=3:30, 
    samples=100, 
    color=red,
]
{2*x*(cos(deg(((2-(2-1))*(3.14))/(x))))-2*(x-2)-1};
%{cos(1/x)};
%{cos(deg(x))};
\addlegendentry{$F^{2}(x)$}
%Below f_{4} is defined
\addplot [
    domain=5:30, 
    samples=100, 
    color=blue,
]
{2*x*cos(deg(((4-(2-1))*(3.14))/(x)))+2*x*cos(deg(((4-(4-1))*(3.14))/(x)))- 4*(x-4)-1};
%{cos(deg(3.14 * x))};
%{cos(deg(x))};
\addlegendentry{$F^{4}(x)$}

%Below f_{6} is defined
\addplot [
    domain=7:30, 
    samples=100, 
    color=green,
]
{2*x*cos(deg(((6-(2-1))*(3.14))/(x)))+2*x*cos(deg(((6-(4-1))*(3.14))/(x)))+2*x*cos(deg(((6-(6-1))*(3.14))/(x)))- 6*(x-6)-1};
%{cos(deg(3.14 * x))};
%{cos(deg(x))};
\addlegendentry{$F^{6}(x)$}
%Below f_{8} is defined
\addplot [
    domain=9:30, 
    samples=100, 
    color=orange,
]
{2*x*cos(deg(((8-(2-1))*(3.14))/(x)))+2*x*cos(deg(((8-(4-1))*(3.14))/(x)))+2*x*cos(deg(((8-(6-1))*(3.14))/(x)))+2*x*cos(deg(((8-(8-1))*(3.14))/(x)))- 8*(x-8)-1};
%{cos(deg(3.14 * x))};
%{cos(deg(x))};
\addlegendentry{$F^{8}(x)$}
\addplot[smooth,mark=*,black] plot coordinates {
        (6,1.39389713287)
    };
    \addlegendentry{(6,$F^2(6))=(6,F^4(6)$)}
    
\addplot[smooth,mark=*,black] plot coordinates {
        (10,5.8)
    };
\addlegendentry{(10,$F^4(10))=(10,F^6(10)$)}
\addplot[smooth,mark=*,black] plot coordinates {
        (14,12.38)
    };
\addlegendentry{(14,$F^6(14))=(14,F^8(14)$)}
    
\end{axis}
\end{tikzpicture}
%\end{subfigure}
\end{figure}
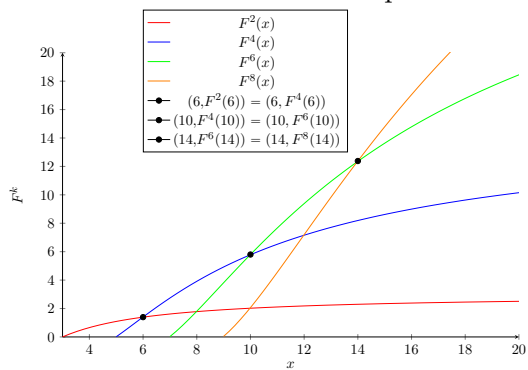

\section{Proof of Lemma \ref{techlemma}} \label{LEMMA}
In this section we prove Lemma \ref{techlemma}. The authors were unable to find a source and we expect that this Lemma is well-known. We first recall a version of the Mean Value Theorem for the second derivative and prove Lemma $\ref{techlemma}$ by its contrapositive statement in Lemma \ref{LEMMA2}.

\begin{lemma} \label{MVTSD}
Let $f:\mathbb{R} \rightarrow \mathbb{R}$ be a second differentiable function. If $f'(a)\leq 0$ and $f'(b) \geq 0$ for some $a,b \in \mathbb{R}$ where $a<b$ then there exists a number $c \in (a,b)$ where $f''(c)\geq 0$.
\end{lemma}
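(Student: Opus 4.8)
The plan is to reduce the claim to a single application of the ordinary Mean Value Theorem applied not to $f$ itself but to its first derivative. I would set $g := f'$. Since $f$ is twice differentiable on all of $\mathbb{R}$, the function $g$ is differentiable on $(a,b)$ and, being differentiable, continuous on $[a,b]$; thus $g$ satisfies the hypotheses of the Mean Value Theorem on the interval $[a,b]$. This reframing is the only conceptual move needed, since the conclusion we want, $f''(c) \geq 0$, is a statement about $g' = f''$, and a sign condition on $g$ at the endpoints is exactly the kind of input the Mean Value Theorem converts into information about $g'$.

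First I would invoke the Mean Value Theorem for $g$ on $[a,b]$ to produce a point $c \in (a,b)$ with
\[
f''(c) = g'(c) = \frac{g(b)-g(a)}{b-a} = \frac{f'(b)-f'(a)}{b-a}.
\]
Next I would estimate the two factors on the right. The hypotheses $f'(a) \leq 0$ and $f'(b) \geq 0$ give $f'(b) - f'(a) \geq 0$ for the numerator, while $a < b$ gives $b - a > 0$ for the denominator. Hence the quotient is nonnegative, so $f''(c) \geq 0$, which is precisely the desired conclusion.

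The only point that requires any care is checking that the Mean Value Theorem genuinely applies, namely that $g = f'$ is continuous on the closed interval $[a,b]$ and differentiable on the open interval $(a,b)$; both follow at once from the assumed twice-differentiability of $f$, since differentiability of $f'$ forces its continuity. Beyond this verification there is no real obstacle: the lemma is essentially the observation that $f'$ cannot have a strictly negative average rate of change across $[a,b]$ while passing from a nonpositive value to a nonnegative one, and the Mean Value Theorem records this average rate of change as a genuine value of $f''$ at some interior point.
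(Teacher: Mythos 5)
Your argument is correct: applying the Mean Value Theorem to $g=f'$ on $[a,b]$ yields a $c\in(a,b)$ with $f''(c)=\frac{f'(b)-f'(a)}{b-a}\geq 0$, and twice-differentiability of $f$ guarantees the continuity and differentiability hypotheses for $g$. The paper states this lemma without proof, and your one-step MVT argument is exactly the standard justification the authors evidently have in mind.
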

%\begin{proof}
%By the Mean Value Theorem there exists a $c \in (a,b)$ such that $f''(c)=\frac{f'(b)-f'(a)}{b-a}\geq 0$.
%\end{proof}

%\begin{lemma} \label{LEMMA1}
%Let $f:\mathbb{R} \rightarrow \mathbb{R}$ be a second differentiable function where $\displaystyle \lim_{x \rightarrow \infty}f(x)=L\in \mathbb{R}$. If $f'(b)\leq 0$ for some $b \in (a, \infty)$ and $f(b)<L$ then there exists a number $d \in (a,\infty)$ where $f''(d) \geq 0.$
%\end{lemma}

%\begin{proof}

%Since $f$ is continuous and $y=L$ is a horizontal asymptote of $f$, there exists a $c \in (b,\infty)$ where $f(b)<f(c)<L$. By the Mean Value Theorem there exists a $d \in (b,c)$ where $f'(d)\ge 0.$ By Lemma \ref{MVTSD}, there exists $d \in (b,c) \subset (a,\infty)$ where $f''(d) \geq 0$. This concludes the proof.
%\end{proof}

\begin{lemma} \label{LEMMA2}
Let $f:\mathbb{R} \rightarrow \mathbb{R}$ be a second differentiable function where $\displaystyle \lim_{x \rightarrow \infty}f(x)=L\in \mathbb{R}$. If $f'(b) \leq 0$ for some $b \in (a, \infty)$ then there exists a number $c \in (a, \infty)$ where $f''(c) \geq 0$.
\end{lemma}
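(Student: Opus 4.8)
The plan is to prove Lemma \ref{LEMMA2} by contradiction, negating its conclusion. Thus I would \textbf{assume} that $f''(x) < 0$ for every $x \in (a,\infty)$ and then contradict the hypothesis $\lim_{x\to\infty} f(x) = L \in \mathbb{R}$. The guiding idea is that strict concavity forces $f$ to decay without bound once its derivative has dipped below zero, which is incompatible with convergence to a finite limit.

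First I would record that $f'' < 0$ on $(a,\infty)$ makes $f'$ \emph{strictly decreasing} there: applying the ordinary Mean Value Theorem to the differentiable function $f'$ on any $[s,t] \subset (a,\infty)$ gives $f'(t) - f'(s) = f''(\eta)(t-s) < 0$. Alternatively, Lemma \ref{MVTSD} can be invoked in contrapositive form: since by assumption no $c$ satisfies $f''(c) \ge 0$, one cannot have $f'(s) \le 0$ and $f'(t) \ge 0$ for $s < t$, so the hypothesis $f'(b) \le 0$ already forces $f'(t) < 0$ for all $t > b$.

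Next, to obtain a \emph{uniform} negative bound rather than merely $f' < 0$, I would fix any point $p \in (b,\infty)$. Since $f'$ is strictly decreasing and $f'(b) \le 0$, we get $m := f'(p) < f'(b) \le 0$, so $m < 0$, and monotonicity yields $f'(x) < m$ for all $x > p$. Applying the Mean Value Theorem to $f$ on $[p,x]$ gives $f(x) - f(p) = f'(\xi)(x-p) < m(x-p)$ for some $\xi \in (p,x)$; letting $x \to \infty$ forces $f(x) \to -\infty$, contradicting $\lim_{x\to\infty} f(x) = L$. This contradiction shows that some $c \in (a,\infty)$ must satisfy $f''(c) \ge 0$.

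The main obstacle, and the reason a uniform bound is required, is the borderline case $f'(b) = 0$: one cannot conclude $f \to -\infty$ from $f' < 0$ alone, since a strictly decreasing convergent function exists (for instance $f(x) = 1/x$ has $f' < 0$ everywhere yet tends to a finite limit, while $f'' > 0$). The argument must therefore exploit strict concavity to pass from $f'(b) \le 0$ to a value $f'(p) = m < 0$ that is strictly negative at a later point, and then bound $f'$ above by $m$ thereafter. Choosing $p$ strictly to the right of $b$ handles the case $f'(b) = 0$ uniformly together with the case $f'(b) < 0$.
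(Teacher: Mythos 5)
Your proof is correct, but it takes a genuinely different route from the paper's. You argue by contradiction: assuming $f''<0$ throughout $(a,\infty)$, you use the Mean Value Theorem applied to $f'$ to make $f'$ strictly decreasing, pass from $f'(b)\le 0$ to a strictly negative value $m=f'(p)$ at some $p>b$ (correctly flagging that this step is what handles the borderline case $f'(b)=0$), and then bound $f(x)<f(p)+m(x-p)\to-\infty$, contradicting the finite limit $L$. The paper instead works forward without a global contradiction hypothesis: it splits into cases according to whether $f(b)<L$ or $f(b)\ge L$ (with further subcases), and in each case uses the horizontal asymptote to manufacture two points where $f'$ has the appropriate signs, so that Lemma \ref{MVTSD} produces the desired $c$ with $f''(c)\ge 0$. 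Your argument is shorter, avoids the case analysis entirely, and does not even need Lemma \ref{MVTSD} (only the ordinary Mean Value Theorem); the paper's version is more constructive in that it localizes an interval containing the point where $f''\ge 0$, at the cost of several subcases. Both establish the lemma, and hence Lemma \ref{techlemma}, correctly.
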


\begin{proof}
Case (I) Consider the case that $f(b) < L$. Since $f$ is continuous and $y=L$ is a horizontal asymptote of $f$, there exists a $c \in (b,\infty)$ where $f(b)<f(c)<L$. By the Mean Value Theorem there exists a $d \in (b,c)$ where $f'(d)\ge 0.$ By Lemma \ref{MVTSD}, there exists $e \in (b,d) \subset (a,\infty)$ where $f''(e) \geq 0$. The result follows.

Case (II) Consider the case that $f(b) \geq L$.
Subcase (1) If $f(b)=L$ and $f(x)=L$ is the constant function then we are done. 

Subcase (2): Suppose that there exists a $c\in (b, \infty)$ where $f(c)<L.$ By the Mean Value theorem there exists $d \in (b,c)$ where $f'(d)=\frac{f(c)-f(b)}{c-b}<0.$ Since $f(c)<L$ and $y=L$ is a horizontal asymptote of $f$, there exists $e \in (c,\infty)$ where $f(c)<f(e)<L.$ By The Mean Value Theorem there exists $h \in (c,e)$ where $f'(h)=\frac{f(e)-f(c)}{e-c}>0$. The result follows from Lemma \ref{MVTSD} since $f'(d)<0$ and $f'(c)>0.$

Subcase (3): Suppose that $f(x) \geq L$ for all $x \in (b,\infty)$. We may assume that there is a $d \in (b,\infty)$ where $f(d)>L.$

Sub-subcase (i): Suppose that there exists $e \in (d,\infty)$ where $f(e)=L$. Choose $h \in (e, \infty)$. Then by the Mean Value Theorem there exits $k \in (e,h)$ where $f'(k)=\frac{f(h)-f(e)}{h-e} \geq 0$. The result follows from Lemma \ref{MVTSD}.

Sub-subcase (ii): Since $y=L$ is a horizontal asymptote of $f$ then there exists a $e \in (d,\infty)$ where $L<f(e)<f(d)$. By the Mean Value Theorem there exists an $h \in (d,e)$ where $f'(h)=\frac{f(e)-f(d)}{e-d}<0.$ Observe that \[ \lim_{x \rightarrow \infty} \frac{f(x)-f(e)}{x-e}=0. \] Thus one can choose $k \in (e,\infty)$ where $\frac{f(k)-f(e)}{k-e} \geq f'(h).$ In particular, there exists an $i \in (e,k)$ where $f'(i)=\frac{f(k)-f(e)}{k-e} \geq f'(h)$ by the Mean Value Theorem. By the Mean Value Theorem, again, there exists a $l \in (h,i) \subset (a, \infty)$ where $f''(l)=\frac{f'(i)-f'(h)}{i-h} \geq 0$. This completes the proof.
\end{proof}
Lemma \ref{techlemma} follows.

\section{A second proof}
The authors would like to thank the anonymous referee for sharing the following alternative proof. Recall that $\displaystyle \delta_0(x)=n\frac{\sin(\pi x/n)}{ \sin(\pi/n)}$. We begin with a Lemma.

\begin{lemma} \label{lemma}
Let $n \geq 6$ be a positive integer. Then $\displaystyle n\frac{\sin(\pi x/n)}{ \sin(\pi/n)} \geq x(n-x)+1$ for $x \in [3, n/2]$.
\end{lemma}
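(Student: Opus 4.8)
The plan is to bound the left-hand side below by an algebraically simpler expression using the elementary inequality $\sin t\le t$ for $t\ge 0$, and then to reduce the resulting inequality to a single-variable statement that can be settled by a short derivative analysis. First I would use $\sin(\pi/n)\le \pi/n$ to write
\[
\frac{n\sin(\pi x/n)}{\sin(\pi/n)} \ge \frac{n\sin(\pi x/n)}{\pi/n} = \frac{n^2}{\pi}\sin\!\left(\frac{\pi x}{n}\right),
\]
which is legitimate because $\pi x/n\in(0,\pi/2]$ keeps $\sin(\pi x/n)\ge 0$. Thus it suffices to prove the cleaner inequality $\frac{n^2}{\pi}\sin(\pi x/n)\ge x(n-x)+1$.

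Next I would introduce the change of variable $u=\pi x/n$, so that $x\in[3,n/2]$ corresponds to $u\in[3\pi/n,\pi/2]$ (this interval is nonempty exactly when $n\ge 6$, which is where the hypothesis enters). Writing $x=nu/\pi$ and $n-x=n(\pi-u)/\pi$ turns the target inequality, after dividing by $n^2/\pi$, into
\[
\phi(u):=\sin u-u+\frac{u^2}{\pi}\ \ge\ \frac{\pi}{n^2}.
\]
I would then show that $\phi$ is increasing on $[0,\pi/2]$: a direct computation gives $\phi'(u)=\cos u-1+2u/\pi$ with $\phi'(0)=\phi'(\pi/2)=0$, while $\phi''(u)=-\sin u+2/\pi$ changes sign exactly once on $[0,\pi/2]$, so $\phi'$ rises from $0$ and falls back to $0$ and hence stays nonnegative. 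Consequently the minimum of $\phi$ over $[3\pi/n,\pi/2]$ occurs at the left endpoint $u=3\pi/n$.

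It then remains to verify $\phi(3\pi/n)\ge \pi/n^2$. Setting $v=3\pi/n\in(0,\pi/2]$ and using $n^2=9\pi^2/v^2$, this becomes $\sin v-v+v^2/\pi\ge v^2/(9\pi)$, i.e.
\[
\psi(v):=\sin v-v+\frac{8v^2}{9\pi}\ \ge\ 0\qquad (v\in(0,\pi/2]).
\]
I would finish by the same style of argument: $\psi(0)=0$, $\psi'(0)=0$, and $\psi''(v)=-\sin v+16/(9\pi)$ has a single sign change on $[0,\pi/2]$, so $\psi'$ rises from $0$ and then decreases to $\psi'(\pi/2)=-1/9<0$; hence $\psi$ is unimodal (increasing then decreasing) and its minimum on $[0,\pi/2]$ is attained at an endpoint. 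Since $\psi(0)=0$ and $\psi(\pi/2)=1-\pi/2+2\pi/9>0$, we conclude $\psi\ge 0$, which completes the chain of reductions.

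I expect the main obstacle to be the bookkeeping in the two derivative-sign analyses, namely confirming that $\phi''$ and $\psi''$ each change sign exactly once on $[0,\pi/2]$ and tracking the resulting monotonicity of $\phi'$ and $\psi'$, together with checking that the initial bound $\sin(\pi/n)\le\pi/n$ does not give away too much near the endpoint $x=3$; this last point is precisely what forces the threshold $n\ge 6$.
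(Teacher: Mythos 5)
Your argument is correct, but it takes a genuinely different route from the paper after the shared first step. Both proofs begin by bounding the denominator via $\sin(\pi/n)\le\pi/n$, but from there the paper also replaces the numerator by its cubic Taylor lower bound $\sin t\ge t-t^3/6$, obtaining $nx-\frac{\pi^2x^3}{6n}$, then uses $x^3\le (n/2)x^2$ on the range $x\le n/2$ to reduce everything to the single quadratic inequality $x^2\ge\bigl(1-\tfrac{\pi^2}{12}\bigr)^{-1}$, which is settled by $\pi^2<10$ and $x\ge 3$. You instead keep $\sin(\pi x/n)$ exact, substitute $u=\pi x/n$, and run two rounds of second-derivative sign analysis to establish monotonicity of $\phi$ and unimodality of $\psi$; I checked the key numerical facts ($\phi'(0)=\phi'(\pi/2)=0$, $\psi'(\pi/2)=-1/9$, $\psi(\pi/2)=1-\tfrac{5\pi}{18}>0$) and the chain of reductions is sound, including the correct appearance of the hypothesis $n\ge 6$ as the condition $3\pi/n\le\pi/2$. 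The trade-off: your approach gives away less (only the denominator is estimated) and isolates cleanly where the constant $+1$ and the threshold $x\ge 3$ matter, at the cost of more calculus bookkeeping; the paper's approach is shorter and purely algebraic once the two elementary sine bounds are invoked, which is in keeping with the referee's stated aim of using only elementary inequalities.
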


\begin{proof}
We use the elementary inequalities $\displaystyle x-\frac{x^3}{6}=x\left(1-\frac{x^2}{6} \right) \leq \sin(x) \leq x$ for $x \geq 0$ to see that \[ \displaystyle n\frac{\sin(\pi x/n)}{ \sin(\pi/n)} \geq \frac{n(\frac{\pi x}{n})\left(1-\frac{(\frac{\pi x}{n})^2}{6} \right)}{\frac{\pi}{n}} = nx-\frac{\pi^2x^3}{6n}. \]
Now for $3 \leq x \leq \frac{n}{2}$ we have $x^3 \leq (n/2)x^2$ so we have that \[ \displaystyle nx-\frac{\pi^2x^3}{6n} \geq nx-\frac{\pi^2x^2}{12}.\]
To prove the lemma, it suffices to show that $\displaystyle nx-\frac{\pi^2x^2}{12} \geq nx-x^2+1$ for $x \in [3,n/2]$. Equivalently, we must show that $x^2 \geq \left(1-\frac{\pi^2}{12} \right)^{-1}$. Using the fact that $\pi^2<10$, we see that $\left(1-\frac{\pi^2}{12}\right)^{-1}<6<x^2$ for $x \geq 3$. The inequality holds for $x \in [3,n/2]$. The result follows.
\end{proof}

\begin{prop}
Let $n \geq 4$ be a positive integer and let $k \in \{2,3, \cdots, \lfloor n/2 \rfloor \}$. Then $n\frac{\sin(\pi k/n)}{ \sin(\pi/n)} >k(n-k)+1$.
\end{prop}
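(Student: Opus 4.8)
The plan is to reduce the proposition to the key inequality already established in Lemma \ref{lemma}, handling the small cases that the lemma does not cover by direct computation. First I would observe that Lemma \ref{lemma} gives precisely the desired strict-looking estimate $n\frac{\sin(\pi x/n)}{\sin(\pi/n)} \geq x(n-x)+1$ for all real $x \in [3, n/2]$ whenever $n \geq 6$; specializing $x$ to an integer $k$ in the range $\{3, 4, \dots, \lfloor n/2 \rfloor\}$ therefore yields the non-strict version of the claim. The main gap to close is that the proposition asks for \emph{strict} inequality, while the lemma proves $\geq$, so I would revisit the inequality chain in the lemma's proof: the step $x^2 \geq (1 - \pi^2/12)^{-1}$ is in fact strict for $x \geq 3$ (the proof shows $(1-\pi^2/12)^{-1} < 6 < x^2$), which propagates strictness through the chain and upgrades the conclusion to $>$.

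Next I would dispose of the cases that fall outside the hypotheses of Lemma \ref{lemma}, namely $k = 2$ (since the lemma only covers $x \geq 3$) and the small values $n \in \{4, 5\}$ (since the lemma requires $n \geq 6$). For $k = 2$ the target inequality $n\frac{\sin(2\pi/n)}{\sin(\pi/n)} > 2(n-2)+1 = 2n-3$ simplifies, via $\sin(2\pi/n) = 2\sin(\pi/n)\cos(\pi/n)$, to $2n\cos(\pi/n) > 2n - 3$, i.e. $2n(1 - \cos(\pi/n)) < 3$; using $1 - \cos\theta \leq \theta^2/2$ this reduces to $n \cdot (\pi/n)^2 = \pi^2/n < 3$, which holds for all $n \geq 4$. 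The finitely many remaining pairs — those with $n \in \{4,5\}$, where $k$ ranges over $\{2, \lfloor n/2 \rfloor\}$ — can be checked by direct numerical evaluation, as can the edge case $k = \lfloor n/2 \rfloor$ when it coincides with a boundary not covered above.

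I expect the main obstacle to be bookkeeping the boundary rather than any deep analytic difficulty: one must confirm that the union of the range $\{3, \dots, \lfloor n/2 \rfloor\}$ handled by Lemma \ref{lemma}, the special value $k=2$, and the small-$n$ exceptions genuinely exhausts all pairs $(k,n)$ with $n \geq 4$ and $2 \leq k \leq \lfloor n/2 \rfloor$. In particular I would verify that for $n \geq 6$ every integer $k$ with $3 \leq k \leq \lfloor n/2 \rfloor$ indeed lies in the closed interval $[3, n/2]$ to which the lemma applies, which is immediate since $\lfloor n/2 \rfloor \leq n/2$. Assembling these pieces gives the strict inequality for the full range and completes the proof.
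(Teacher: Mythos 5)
Your proposal is correct and follows essentially the same route as the paper's proof: invoke Lemma \ref{lemma} for $k \geq 3$ (noting $k\geq 3$ forces $n \geq 2k \geq 6$), and settle $k=2$ separately via the double-angle identity $\sin(2\pi/n)=2\cos(\pi/n)\sin(\pi/n)$ together with $\cos\theta \geq 1-\theta^2/2$. Your extra care in checking that strictness propagates through the lemma's inequality chain (the paper states the lemma with $\geq$ but asserts $>$ in the proposition) is a worthwhile refinement rather than a deviation.
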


\begin{proof}
For $k \geq 3$ and consequently $n \geq 6$, the result follows from Lemma \ref{lemma}. It suffices to consider the case when $k=2$. By the double angle formula, we have $\sin(2 \pi/n)=2 \cos(\pi/n)\sin(\pi/n)$, so the case $k=2$ reduces to the inequality \[\displaystyle 2n \cos(\pi/n) \geq 2n-3. \] Using the elementary inequality $\displaystyle \cos(x) \geq 1-\frac{x^2}{2}$ for $x \geq 0$, we have that \[\displaystyle 2n \cos \left(\pi/n \right) \geq 2n- \frac{2n \left (\frac{\pi}{n} \right)^2}{2}=2n-\frac{\pi^2}{n}. \] Since $n \geq 4$ and $\pi^2<10$ we see $2n-\frac{\pi^2}{n}>2n-3$. The result follows.
\end{proof}

Since $\Gr(k,n)$ is isomorphic to $\Gr(n-k,n)$ it is suffices to show that Galkin's conjectured lower bound holds for $k=1,2, \cdots, \lfloor n/2 \rfloor$ with equality only when $k=1$. This follows immediately from the above inequalities for $n \geq 4$. The bound obviously holds for the $n=2$ and $n=3$ cases. Theorem \ref{mainthm} follows.

%\begin{prop}
%Let $f:\mathbb{R} \rightarrow \mathbb{R}$ be a second differentiable function where $\displaystyle \lim_{x \rightarrow \infty}f(x)=L\in \mathbb{R}$. If $f''(x)<0$ for all $x \in (a,\infty)$ then $f'(x)>0$ for all $x \in (a,\infty)$.
%\end{prop}
%\begin{proof}
%This follows by the previous Lemma.
%\end{proof}

\bibliography{Oconj}
\bibliographystyle{halpha}
\end{document}